\newcommand{\leb}{\mathrm{L}}
\theoremstyle{plain}
\newtheorem{theorem}{Theorem}[section] 
\newtheorem{example}[theorem]{Example}
\numberwithin{equation}{section} 
\title[Non-autonomous second order linear RODE in Mathematical Modeling]{Some notes to extend the study on random non-autonomous second order linear differential equations appearing in Mathematical Modeling}
\author{J. Calatayud, J.-C. Cort\'{e}s, M. Jornet} 
\begin{document}

\maketitle

\begin{center}
\noindent
\address{Instituto Universitario de Matem\'{a}tica Multidisciplinar,\\
Universitat Polit\`{e}cnica de Val\`{e}ncia,\\
Camino de Vera s/n, 46022, Valencia, Spain\\
email: jucagre@doctor.upv.es; jccortes@imm.upv.es; marjorsa@doctor.upv.es
}
\end{center}

\begin{abstract}
The objective of this paper is to complete certain issues from our recent contribution [J. Calatayud, J.-C. Cort\'es, M. Jornet, L. Villafuerte, \textit{Random non-autonomous second order linear differential equations: mean square analytic solutions and their statistical properties}, Advances in Difference Equations, 2018:392, 1--29 (2018)]. We restate the main theorem therein that deals with the homogeneous case, so that the hypotheses are clearer and also easier to check in applications. Another novelty is that we tackle the non-homogeneous equation with a theorem of existence of mean square analytic solution and a numerical example. We also prove the uniqueness of mean square solution via an habitual Lipschitz condition that extends the classical Picard Theorem to mean square calculus. In this manner, the study on general random non-autonomous second order linear differential equations with analytic data processes is completely resolved. Finally, we relate our exposition based on random power series with polynomial chaos expansions and the random differential transform method, being the latter a reformulation of our random Fr\"obenius method. \\
\\
\textit{Keywords: Random non-autonomous second order linear differential equation; Mean square analytic solution; Random power series; Uncertainty quantification.} \\ 
\\
\textit{AMS Classification 2010: 34F05; 60H10; 60H35; 65C20; 65C30.}
\end{abstract}

\section{Introduction}

The important role played by differential equations in dealing with mathematical modeling is beyond discussion. They are powerful tools to describe the dynamics of phenomena appearing in a variety of distinct realms, such as Engineering, Biomedicine, Chemistry, social behavior, etc. \cite{Goodwine_2011,Schiesser_2014,Brown_2007}. In this paper we concentrate on a class of differential equations that have played a distinguished role in a variety of applications in science, in particular in Physics and Engineering, namely, second order linear differential equations. Indeed, these equations have been successfully applied to describe, for example, vibrations in springs (free undamped or simple harmonic motion, damped vibrations subject to a frictional force or forced vibrations affected by an external force), analysis of electric circuits made up of an electromotive force supplied by a battery or generator, a resistor, an inductor and a capacitor. In the former type of problems, second order linear differential equations are formulated by applying Newton's Second Law, while in the latter case this class of equations appear via the application of Kirchhoff's voltage law. In these examples, the formulation of second order linear differential equations to describe the aforementioned physical  problems appear as direct applications of important laws of Physics. However, this class of equations also arise indirectly when solving significant partial differential equations in Physics. In this regard, Airy, Hermite, Laguerre, Legendre or Bessel differential equations are non-autonomous second order linear differential equations that emerge in this way. For example, Airy equation appears when solving Schr\"{o}dinger's equation with triangular potential and for a particle subject to a one-dimensional constant force field \cite{Vallee_Soares_2004}; Hermite equation emerges in dealing with the analysis of Schr\"{o}dinger's equation for a harmonic oscillator in Quantum Mechanics \cite{Fedoryuk_2001}; Laguerre equation plays a main role in Quantum Mechanics for the study of the hydrogen atom via the Schr\"{o}dinger's equation using radial functions \cite[Ch.~10]{Spain_Smith_1970}; Legendre equation appears when solving the Laplace equation to compute the potential of a conservative field such as the space gravitational potential using spherical coordinates \cite{Johnson_Pedersen_1986}; and, finally, Bessel equation is encountered, for example, when solving Helmholtz equation in cylindrical or spherical coordinates by using the method of separation of variables \cite[Ch.~9]{Spain_Smith_1970}.

In all the previous examples, two important features can be highlighted to motivate our subsequent analysis. First, the coefficients of the differential equations are analytic (specifically polynomials). Second, these coefficients depend upon physical parameters that, in practice, need to be fixed after measurements, therefore they involve uncertainty. Both facts motivate the study of random  non-autonomous second order linear differential equations whose coefficients and initial conditions are   analytic stochastic  processes and random initial conditions, respectively. The aim of our contribution is to advance in the analysis (both theoretical and practical) of this important class of equations.  In our subsequent development, we mainly focus on theoretical aspects of such analysis with the conviction that it can become really useful in future applications where randomness is considered in that class of differential equations. In this sense, some numerical experiments illustrating and demonstrating the potentiality of our main findings are also included. The study of random non-autonomous second order linear differential equations has been carried out for particular cases, such as Airy, Hermite, Legendre, Laguerre and Bessel equations (see \cite{airy,hermite,legendre,arxiv,laguerre,bessel}, respectively), and the general case \cite{Golmankhaneh_2013,Khudair_2011_a,Khudair_2011_b,contribucio}.

For the sake of clarity in the presentation, the layout of the paper is as follows: in Section~\ref{homoSection} we study the homogeneous random non-autonomous second order linear differential. The analysis includes a result (Theorem \ref{nostre2}) that simplifies the application of a recent finding by the authors that is particularly useful in practical cases. This issue is illustrated via several numerical examples where both the random Airy and Hermite differential equations are treated. In Section~\ref{sec_non-homogeneous} the analysis is extended to random non-autonomous second order linear differential equation with forcing term. In this study we have included conditions under which the solution is unique in the mean square sense. This theoretical study is supported with a numerical example too. Section~\ref{sec_comparison} is addressed to enrich our contribution by comparing the random Fr\"{o}benius method proposed in this paper against other alternative approaches widely used in the extant literature, specifically gPC expansions, Monte Carlo simulations and the random differential transform method. Conclusions and future research lines are drawn in Section~\ref{sec_conclusiones}.

\section{Homogeneous case} \label{homoSection}

We consider the general form of a homogeneous random non-autonomous second order linear differential equation in an underlying complete probability space $(\Omega,\mathcal{F},\mathbb{P})$:
\begin{equation}
 \begin{cases} \ddot{X}(t)+A(t)\dot{X}(t)+B(t)X(t)=0, \; t\in\mathbb{R}, \\ X(t_0)=Y_0, \\ \dot{X}(t_0)=Y_1. \end{cases} 
 \label{problem}
\end{equation}
It is assumed that the stochastic processes $A(t)$ and $B(t)$ are analytic at $t_0$ in the mean square sense \cite[p.~99]{soong}:
\[ A(t)=\sum_{n=0}^\infty A_n(t-t_0)^n,\quad B(t)=\sum_{n=0}^\infty B_n(t-t_0)^n, \quad t\in (t_0-r,r_0+r), \]
with convergence in $\leb^2(\Omega)$. The terms $A_0,A_1,\ldots$ and $B_0,B_1,\ldots$ are random variables. In fact, they are related to $A(t)$ and $B(t)$, respectively, via Taylor expansions:
\[ A_n=\frac{A^{(n)}(t_0)}{n!},\quad B_n=\frac{B^{(n)}(t_0)}{n!}, \]
where the derivatives are considered in the mean square sense. According to the Fr\"obenius method, we look for a solution stochastic process $X(t)$ also expressible as a mean square convergent random power series on $(t_0-r,t_0+r)$:
\[ X(t)=\sum_{n=0}^\infty X_n(t-t_0)^n. \]
Mean square convergence is important, as it allows approximating the expectation (average) and variance (dispersion) statistics of $X(t)$ at each $t$ \cite[Th.~4.2.1, Th.~4.3.1]{soong}. This is one of the primary goals of uncertainty quantification \cite{Smith}.

In \cite{contribucio}, after stating and proving some auxiliary theorems on random power series (differentiation of random power series in the $\leb^p(\Omega)$ sense \cite[Th.~3.1]{contribucio}, and Merten's theorem for random series in the mean square sense \cite[Th.~3.2]{contribucio}, which generalize their deterministic counterparts), the main theorem was stated as follows:

\begin{theorem} \label{nostre} \cite[Th.~3.3]{contribucio}
Let $A(t)=\sum_{n=0}^\infty A_n (t-t_0)^n$ and $B(t)=\sum_{n=0}^\infty B_n (t-t_0)^n$ be two random series in the $\leb^2(\Omega)$ setting, for $t\in (t_0-r,t_0+r)$, being $r>0$ finite and fixed. Assume that the initial conditions $Y_0$ and $Y_1$ belong to $\leb^2(\Omega)$. Suppose that there is a constant $C_r>0$, maybe dependent on $r$, such that $\|A_n\|_{\leb^\infty(\Omega)}\leq C_r/r^n$ and $\|B_n\|_{\leb^\infty(\Omega)}\leq C_r/r^n$, $n\geq0$. Then the stochastic process $X(t)=\sum_{n=0}^\infty X_n(t-t_0)^n$, $t\in (t_0-r,t_0+r)$, where
\begin{equation} X_0=Y_0,\quad X_1=Y_1, \label{x0y0} \end{equation}
\begin{equation}
 X_{n+2}=\frac{-1}{(n+2)(n+1)}\sum_{m=0}^n \left[(m+1)A_{n-m}X_{m+1}+B_{n-m}X_m\right],\;n\geq0, 
 \label{xn2}
\end{equation}
is the unique analytic solution to the random initial value problem (\ref{problem}) in the mean square sense. 
\end{theorem}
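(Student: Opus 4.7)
My plan is threefold: (a) construct the coefficients $X_n$ by the recurrence (\ref{xn2}) and establish mean square convergence of $X(t)=\sum_{n\geq 0} X_n(t-t_0)^n$ on $(t_0-r,t_0+r)$ by a uniform geometric bound on $\|X_n\|_{\leb^2(\Omega)}$; (b) verify that the resulting random power series solves (\ref{problem}); and (c) deduce uniqueness among mean square analytic solutions. Steps (b) and (c) will reduce to mechanical applications of the term-by-term differentiation theorem \cite[Th.~3.1]{contribucio} and the mean square Mertens' theorem \cite[Th.~3.2]{contribucio}; the analytic effort sits entirely in step (a).

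\textbf{Inductive bound on the coefficients.} Set $a_n:=\|X_n\|_{\leb^2(\Omega)}$. Because $A_{n-m}, B_{n-m}\in\leb^\infty(\Omega)$ with norms at most $C_r/r^{n-m}$, taking $\leb^2$ norms in (\ref{xn2}) and using H\"older yields
\[
a_{n+2}\leq \frac{C_r}{(n+2)(n+1)}\sum_{m=0}^n\frac{(m+1)\,a_{m+1}+a_m}{r^{n-m}}.
\]
Fix an auxiliary radius $\rho\in(0,r)$ and write $q:=\rho/r\in(0,1)$. I propose the ansatz $a_n\leq K/\rho^n$ and proceed by strong induction. Substituting the hypothesis, factoring $\rho^{-(n+2)}$ out of each summand, and bounding the resulting partial geometric sums by $1/(1-q)$ leads to
\[
a_{n+2}\leq \frac{K}{\rho^{n+2}}\cdot \frac{C_r\,\rho\,(n+1+\rho)}{(n+2)(n+1)(1-q)}.
\]
The right-hand prefactor is $O(1/n)$, so there exists $N=N(C_r,\rho)$ such that it is bounded by $1$ for $n\geq N$. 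Choosing $K$ large enough that $\rho^m a_m\leq K$ for every $m\leq N+1$ (the first few $a_m$ are finite because $Y_0,Y_1\in\leb^2(\Omega)$ and $A_k,B_k\in\leb^\infty(\Omega)$) closes the induction. Then $\|X_n(t-t_0)^n\|_{\leb^2}\leq K\,(|t-t_0|/\rho)^n$ is summable for $|t-t_0|<\rho$, and arbitrariness of $\rho\in(0,r)$ yields mean square convergence on $(t_0-r,t_0+r)$.

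\textbf{Verification and uniqueness.} By \cite[Th.~3.1]{contribucio}, $X$ is twice mean square differentiable on $(t_0-r,t_0+r)$ with $\dot X,\ddot X$ obtained by termwise differentiation; in particular $X(t_0)=X_0=Y_0$ and $\dot X(t_0)=X_1=Y_1$. Applying \cite[Th.~3.2]{contribucio} (Mertens) to the Cauchy products $A(t)\dot X(t)$ and $B(t)X(t)$, the coefficient of $(t-t_0)^n$ in $\ddot X(t)+A(t)\dot X(t)+B(t)X(t)$ is precisely the quantity that (\ref{xn2}) sets equal to zero, so the ODE holds in the mean square sense. For uniqueness, if $\tilde X(t)=\sum_{n\geq 0}\tilde X_n(t-t_0)^n$ is any other mean square analytic solution in a neighborhood of $t_0$, the same differentiation and Mertens identities force the coefficients $\tilde X_n$ to obey the recurrence (\ref{xn2}) with initial data $\tilde X_0=Y_0$, $\tilde X_1=Y_1$; induction then gives $\tilde X_n=X_n$ for all $n$.

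\textbf{Main obstacle.} The delicate point is the induction in step (a). The linear factor $(m+1)$ inside (\ref{xn2}) prevents a direct estimate at the full radius $r$: with ansatz $a_n\leq K/r^n$ the analogous sum leaves a residual factor that does not decay in $n$. Passing to a slightly smaller radius $\rho<r$ converts the weights $1/r^{n-m}$ into $q^{n-m}$, which damp the summand enough that the factorial gain $1/[(n+2)(n+1)]$ finally produces the $O(1/n)$ decay needed to close the induction; arbitrariness of $\rho$ then recovers convergence on the full interval $(t_0-r,t_0+r)$.
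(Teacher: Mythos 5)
Your proof is correct. I checked the inductive step: with $a_m\le K/\rho^m$ for $m\le n+1$ and $q=\rho/r$, writing $1/r^{n-m}=q^{n-m}/\rho^{n-m}$ and bounding $\sum_{m=0}^n (m+1)q^{n-m}\le (n+1)/(1-q)$ and $\sum_{m=0}^n q^{n-m}\le 1/(1-q)$ gives exactly
\[
a_{n+2}\le \frac{K}{\rho^{n+2}}\cdot\frac{C_r\,\rho\,(n+1+\rho)}{(n+2)(n+1)(1-q)},
\]
whose prefactor is $O(1/n)$, so the induction closes for $n\ge N$ once $K$ absorbs the finitely many base cases. Your diagnosis of the obstacle is also accurate: at the full radius $r$ the weighted sum $\sum_{m\le n}(m+1)$ grows like $n^2$ and cancels the factorial gain, so the retreat to $\rho<r$ followed by arbitrariness of $\rho$ is genuinely needed. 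Your route does differ from the paper's in the key estimate. The paper does not reprove this theorem (it imports it from \cite{contribucio}), but its proof of the non-homogeneous analogue, Theorem~\ref{nostre3}, displays the method used there: one introduces an exact majorant sequence $H_n$ defined by a recurrence of the type \eqref{hn2_2}, rewrites it as the second-order linear difference equation \eqref{curteta2}, and deduces $\sum_n H_n s^n<\infty$ from the asymptotic behaviour of that difference equation, again for each $s<r$. Your geometric ansatz $a_n\le K/\rho^n$ verified by strong induction replaces that difference-equation analysis with a short, self-contained computation; what it gives up is the explicit majorant sequence $H_n$, which the authors reuse elsewhere to derive a priori error estimates for the truncated series. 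The remaining steps --- verification via termwise differentiation and the mean square Mertens theorem, and uniqueness among analytic solutions by coefficient matching --- coincide with the paper's.
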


This theorem is a generalization of the deterministic Fr\"obenius method to a random framework. As it was demonstrated in \cite{contribucio}, Theorem~\ref{nostre} has many applications in practice. It supposes a unified approach to study the most well-known second order linear random differential equations: Airy \cite{airy}, Hermite \cite{hermite}, Legendre \cite{legendre,arxiv}, Laguerre \cite{laguerre} and Bessel \cite{bessel}. The results established in these articles, \cite{airy,hermite,legendre,arxiv,laguerre,bessel}, are particular cases of Theorem~\ref{nostre}. The main reason of why this fact occurs is explained in \cite[Subsection~3.3]{contribucio}: given a random variable $Z$, the fact that its centered absolute moments grow at most exponentially, $\mathbb{E}[|Z|^n]\leq HR^n$ for certain $H>0$ and $R>0$, is equivalent to $Z$ being essentially bounded, $\|Z\|_{\leb^\infty(\Omega)}\leq R$. 

Notice that Theorem~\ref{nostre} does not require any independence assumption on the random input parameters. Moreover, from Theorem~\ref{nostre}, \cite{contribucio} obtains error estimates for the approximation of the solution stochastic process, its mean and its variance.

Let us see that Theorem~\ref{nostre} may be put in an easier to handle form. We substitute the growth condition on the coefficients $A_0,A_1,\ldots$ and $B_0,B_1,\ldots$ by the $\leb^\infty(\Omega)$ convergence of the random power series that define $A(t)$ and $B(t)$. In this manner, in practical applications, one does not need to find any constant $C_r$, see the forthcoming Examples~\ref{ex1}, \ref{ex2}, \ref{ex3} and \ref{ex4}.

\begin{theorem} \label{nostre2} 
Let $A(t)=\sum_{n=0}^\infty A_n (t-t_0)^n$ and $B(t)=\sum_{n=0}^\infty B_n (t-t_0)^n$ be two random series in the $\leb^\infty(\Omega)$ setting, for $t\in (t_0-r,t_0+r)$, being $r>0$ finite and fixed. Assume that the initial conditions $Y_0$ and $Y_1$ belong to $\leb^2(\Omega)$. Then the stochastic process $X(t)=\sum_{n=0}^\infty X_n(t-t_0)^n$, $t\in (t_0-r,t_0+r)$, whose coefficients are defined by (\ref{x0y0})--(\ref{xn2}), is the unique analytic solution to the random initial value problem (\ref{problem}) in the mean square sense. 
\end{theorem}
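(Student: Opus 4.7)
The plan is to derive Theorem \ref{nostre2} from Theorem \ref{nostre} by exhausting the open interval $(t_0-r,t_0+r)$ with a family of strictly smaller symmetric intervals $(t_0-r',t_0+r')$, $0<r'<r$, on each of which the geometric coefficient bound required by Theorem \ref{nostre} can be deduced from the $\leb^\infty(\Omega)$ convergence of the random series $A(t)$ and $B(t)$. Since the recurrence \eqref{x0y0}--\eqref{xn2} defining the $X_n$ does not depend on the choice of $r'$, the solutions constructed on the sub-intervals will patch automatically into a single solution on $(t_0-r,t_0+r)$.

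Concretely, I would first fix $r'\in(0,r)$ and set $t=t_0+r'$ in the hypothesis, so that $\sum_{n=0}^\infty A_n (r')^n$ converges in the Banach space $\leb^\infty(\Omega)$. Because convergent series in a normed space have terms tending to zero, $\|A_n\|_{\leb^\infty(\Omega)}(r')^n\to 0$, which yields a constant $C_{A,r'}>0$ with $\|A_n\|_{\leb^\infty(\Omega)}\leq C_{A,r'}/(r')^n$ for every $n\geq 0$. The same argument applied to $B$ gives an analogous bound, and setting $C_{r'}:=\max\{C_{A,r'},C_{B,r'}\}$ one recovers the exact hypotheses of Theorem \ref{nostre} on the interval $(t_0-r',t_0+r')$.

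Theorem \ref{nostre} then furnishes, for each such $r'$, a unique mean square analytic solution on $(t_0-r',t_0+r')$ of the form $\sum_{n=0}^\infty X_n(t-t_0)^n$ with $X_n$ given by \eqref{x0y0}--\eqref{xn2}. Since these coefficients depend only on $Y_0$, $Y_1$ and the $A_n$, $B_n$, and are independent of $r'$, the same series converges in $\leb^2(\Omega)$ and solves \eqref{problem} on every $(t_0-r',t_0+r')$ with $0<r'<r$, and hence on the full interval $(t_0-r,t_0+r)$. Uniqueness on $(t_0-r,t_0+r)$ follows from uniqueness on each sub-interval: any competing mean square analytic solution on $(t_0-r,t_0+r)$ restricts to an analytic solution on each $(t_0-r',t_0+r')$ and thus, by Theorem \ref{nostre}, has identical coefficients. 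The only delicate step is the first one, converting $\leb^\infty(\Omega)$ convergence at a single point into a geometric bound on the coefficient norms; this is a general Banach-space fact about convergent series, so no new probabilistic input is required.
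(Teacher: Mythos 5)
Your proof is correct and follows essentially the same route as the paper: reduce to Theorem~\ref{nostre} on each subinterval $(t_0-r',t_0+r')$ by extracting the geometric bound $\|A_n\|_{\leb^\infty(\Omega)}\leq C_{r'}/(r')^n$ from the vanishing of the terms of a convergent series, then let $r'\uparrow r$ and use that the coefficients $X_n$ do not depend on $r'$. The only (harmless) difference is that you obtain the bound directly from the Banach-space fact that terms of a convergent series tend to zero in norm, whereas the paper first passes through the absolute convergence of $\sum_n\|A_n\|_{\leb^\infty(\Omega)}|t-t_0|^n$ via a cited lemma.
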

\begin{proof}
By \cite[Lemma~2.3]{arxiv}, 
\[ \sum_{n=0}^\infty \|A_n\|_{\leb^\infty(\Omega)} |t-t_0|^n<\infty, \quad \sum_{n=0}^\infty \|B_n\|_{\leb^\infty(\Omega)} |t-t_0|^n<\infty, \]
for $t\in (t_0-r,t_0+r)$. Thus, for each $0\leq r_1<r$, 
\[ \sum_{n=0}^\infty \|A_n\|_{\leb^\infty(\Omega)} r_1^n<\infty, \quad \sum_{n=0}^\infty \|B_n\|_{\leb^\infty(\Omega)} r_1^n<\infty. \]
Since the sequences $\{\|A_n\|_{\leb^\infty(\Omega)} r_1^n\}_{n=0}^\infty$ and $\{\|B_n\|_{\leb^\infty(\Omega)} r_1^n\}_{n=0}^\infty$ tend to $0$, they are both bounded by a number $C_{r_1}>0$:
\[ \|A_n\|_{\leb^\infty(\Omega)}\leq \frac{C_{r_1}}{r_1^n},\quad \|B_n\|_{\leb^\infty(\Omega)}\leq \frac{C_{r_1}}{r_1^n},\quad n\geq0. \]
Then Theorem~\ref{nostre} is applicable with $r_1$: the stochastic process $X(t)=\sum_{n=0}^\infty X_n(t-t_0)^n$ whose coefficients are given by (\ref{x0y0})--(\ref{xn2}) is a mean square solution to (\ref{problem}) on $(t_0-r_1,t_0+r_1)$. Now, since $r_1$ is arbitrary, we can can extend this result to the whole interval $(t_0-r,r_0+r)$.
\end{proof}

Notice that we have proved that Theorem~\ref{nostre} from \cite{contribucio} entails Theorem~\ref{nostre2}. But the other way around also holds: Theorem~\ref{nostre2} implies Theorem~\ref{nostre}. Thus, both theorems are equivalent and offer the same information. Indeed, if we assume the hypotheses from Theorem~\ref{nostre}, then 
\[ \|A_n\|_{\leb^\infty(\Omega)}r_1^n \leq C_r\left(\frac{r_1}{r}\right)^n,\quad \|B_n\|_{\leb^\infty(\Omega)}\leq C_r\left(\frac{r_1}{r}\right)^n, \]
for any $0\leq r_1<r$, and since $\sum_{n=0}^\infty (\frac{r_1}{r})^n<\infty$, by comparison we derive that 
\[ \sum_{n=0}^\infty \|A_n\|_{\leb^\infty(\Omega)}r_1^n<\infty,\quad \sum_{n=0}^\infty \|B_n\|_{\leb^\infty(\Omega)}r_1^n<\infty, \]
which entails that the series of $A(t)=\sum_{n=0}^\infty A_n (t-t_0)^n$ and $B(t)=\sum_{n=0}^\infty B_n (t-t_0)^n$ converge $\leb^\infty(\Omega)$, by \cite[Lemma~2.3]{arxiv}, for $t\in (t_0-r_1,t_0+r_1)$. As $r_1$ is arbitrary, the $\leb^\infty(\Omega)$ convergence holds for $t\in (t_0-r,t_0+r)$. This is exactly the hypothesis used in Theorem~\ref{nostre2}.

Let us see that Theorem~\ref{nostre2} has an easier to handle form by checking the hypotheses in the examples from \cite{contribucio}.

\begin{example} \label{ex1}
Airy's random differential equation is defined as follows:
\begin{equation}
 \begin{cases} \ddot{X}(t)+AtX(t)=0, \; t\in\mathbb{R}, \\ X(0)=Y_0, \\ \dot{X}(0)=Y_1, \end{cases} 
 \label{problem_airy}
\end{equation}
where $A$, $Y_0$ and $Y_1$ are random variables. We suppose that $Y_0$ and $Y_1$ have centered second order absolute moments. In \cite{airy}, the hypothesis used in order to obtain a mean square analytic solution $X(t)$ is $\mathbb{E}[|A|^n]\leq HR^n$, $n\geq n_0$. Notice that this hypothesis is equivalent to $\|A\|_{\leb^\infty (\Omega)}\leq R$, by \cite[Subsection~3.3]{contribucio}. In our general notation, $A(t)=0$ and $B(t)=At$. Due to the boundedness of the random variable $A$, the $\leb^\infty(\Omega)$ convergence of the series that define $A(t)$ and $B(t)$ holds, so Theorem~\ref{nostre2} (and Theorem~\ref{nostre}) is applicable: there is an analytic solution stochastic process $X(t)$ to (\ref{problem_airy}) on $\mathbb{R}$.
\end{example}

\begin{example} \label{ex2}
Hermite's random differential equation is given as follows:
\begin{equation}
 \begin{cases} \ddot{X}(t)-2t\dot{X}(t)+AX(t)=0, \; t\in\mathbb{R}, \\ X(0)=Y_0, \\ \dot{X}(0)=Y_1, \end{cases} 
 \label{problem_hermite}
\end{equation}
where $A$, $Y_0$ and $Y_1$ are random variables. We suppose that $Y_0,Y_1\in\leb^2(\Omega)$. In \cite{hermite}, the hypothesis utilized to derive a mean square analytic solution $X(t)$ is $\mathbb{E}[|A|^n]\leq HR^n$, $n\geq n_0$. This hypothesis is equivalent to $\|A\|_{\leb^\infty (\Omega)}\leq R$, by \cite[Subsection~3.3]{contribucio}. Under boundedness of the random variable $A$, the input stochastic processes $A(t)=-2t$ and $B(t)=A$ are expressible as $\leb^\infty(\Omega)$ convergent random power series. Hence, both Theorem~\ref{nostre2} and Theorem~\ref{nostre} are applicable, and guarantee the existence of a mean square solution process $X(t)$ on $\mathbb{R}$.
\end{example}

\begin{example} \label{ex3}
We consider the following random linear differential equation with polynomial data processes:
\begin{equation}
 \begin{cases} \ddot{X}(t)+(A_0+A_1t)\dot{X}(t)+(B_0+B_1t)X(t)=0, \; t\in\mathbb{R}, \\ X(0)=Y_0, \\ \dot{X}(0)=Y_1. \end{cases} 
 \label{problem_polynomial}
\end{equation}
If the initial conditions $Y_0$ and $Y_1$ belong to $\leb^2(\Omega)$ and the random input parameters $A_0$, $A_1$, $B_0$ and $B_1$ are bounded random variables, then the hypotheses of Theorem~\ref{nostre2} (and Theorem~\ref{nostre}) fulfill and we derive that there is a mean square solution process $X(t)$ on $\mathbb{R}$.
\end{example}

\begin{example} \label{ex4}
We consider (\ref{problem}) with non-polynomial analytic stochastic process:
\begin{equation}
 \begin{cases} \ddot{X}(t)+A(t)\dot{X}(t)+B(t)X(t)=0, \; t\in\mathbb{R}, \\ X(0)=Y_0, \\ \dot{X}(0)=Y_1, \end{cases} 
 \label{problem_series}
\end{equation}
where $A_n\sim\text{Beta}(11,15)$, for $n\geq0$, $B_n=1/n^2$, for $n\geq1$, and $Y_0,Y_1\in\leb^2(\Omega)$. Since $\sum_{n=0}^\infty \|A_n\|_{\leb^\infty(\Omega)} |t|^n=\sum_{n=0}^\infty |t|^n$ is finite for $t\in (-1,1)$, and analogously for $B(t)$, Theorem~\ref{nostre2} (and consequently Theorem~\ref{nostre}) implies that there is a mean square solution $X(t)$ to (\ref{problem_series}) on $(-1,1)$.
\end{example}

We raise the following open problem, which would imply that the hypotheses used in Theorem~\ref{nostre2} are necessary: ``If there exists a point $t_1\in (t_0-r,t_0+r)$ such that $A(t_1)\notin\leb^\infty(\Omega)$ or $B(t_1)\notin\leb^\infty(\Omega)$, then there exist two initial conditions $Y_0,Y_1\in\leb^2(\Omega)$ such that (\ref{problem}) has no mean square solution on $(t_0-r,t_0+r)$''. Although we have not been able to prove this statement (which might be false), we think that the proof might be based on the reasoning used in \cite[Example, pp.~4--5]{strand}. 

This open problem, despite being of theoretical interest, does not contribute in practical applications. In numerical experiments, one usually truncates the stochastic processes $A(t)$ and $B(t)$ (that is, work with a partial sum instead of the whole Taylor series). This is not uncommon when dealing with stochastic systems computationally, as one requires a dimensionality reduction of the problem. If the coefficients of $A(t)$ and/or $B(t)$ have unbounded support, one may truncate them so that the hypotheses of Theorem~\ref{nostre} and Theorem~\ref{nostre2} fulfill, and the probabilistic behavior of the data processes does not change much. 

\section{Non-homogeneous case}\label{sec_non-homogeneous}

In this section, we generalize (\ref{problem}) by adding a stochastic source term:
\begin{equation}
 \begin{cases} \ddot{X}(t)+A(t)\dot{X}(t)+B(t)X(t)=C(t), \; t\in\mathbb{R}, \\ X(t_0)=Y_0, \\ \dot{X}(t_0)=Y_1. \end{cases} 
 \label{problem2}
\end{equation}
This new term $C(t)$ is analytic at $t_0$ in the mean square sense \cite[p.~99]{soong}, with Taylor series
\[ C(t)=\sum_{n=0}^\infty C_n (t-t_0)^n,\quad t\in (t_0-r,t_0+r). \]
The coefficients $C_0,C_1,\ldots$ are random variables. For this new model (\ref{problem2}), we want to find conditions under which $X(t)$ is an analytic mean square solution on $(t_0-r,t_0+r)$. This work was not done in \cite{contribucio}, and it completes the study on the random non-autonomous second order linear differential equation with analytic input processes.

The following theorem is a generalization of Theorem~\ref{nostre2}:

\begin{theorem} \label{nostre3} 
Let $A(t)=\sum_{n=0}^\infty A_n (t-t_0)^n$ and $B(t)=\sum_{n=0}^\infty B_n (t-t_0)^n$ be two random series in the $\leb^\infty(\Omega)$ setting, for $t\in (t_0-r,t_0+r)$, being $r>0$ finite and fixed. Let $C(t)=\sum_{n=0}^\infty C_n (t-t_0)^n$ be a random series in the mean square sense on $(t_0-r,t_0+r)$. Assume that the initial conditions $Y_0$ and $Y_1$ belong to $\leb^2(\Omega)$. Then the stochastic process $X(t)=\sum_{n=0}^\infty X_n(t-t_0)^n$, $t\in (t_0-r,t_0+r)$, whose coefficients are defined by 
\begin{equation} X_0=Y_0,\quad X_1=Y_1, \label{x0y0_2} \end{equation}
\begin{equation}
 X_{n+2}=\frac{1}{(n+2)(n+1)}\left\{-\sum_{m=0}^n \left[(m+1)A_{n-m}X_{m+1}+B_{n-m}X_m\right]+C_n\right\},\;n\geq0, 
 \label{xn2_2}
\end{equation}
is the unique analytic solution to the random initial value problem (\ref{problem2}) in the mean square sense. 
\end{theorem}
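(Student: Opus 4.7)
The plan is to reduce the non-homogeneous equation to the setting already covered by Theorem~\ref{nostre2} via the superposition principle. Since the recurrence (\ref{xn2_2}) is linear in the data $(Y_0,Y_1,\{C_n\}_{n\geq 0})$, I would split $X_n = X_n^{(h)} + X_n^{(p)}$, where the homogeneous coefficients $X_n^{(h)}$ are generated by (\ref{x0y0})--(\ref{xn2}) with $X_0^{(h)}=Y_0$, $X_1^{(h)}=Y_1$, and the particular coefficients $X_n^{(p)}$ are generated by (\ref{xn2_2}) with $X_0^{(p)}=X_1^{(p)}=0$. Theorem~\ref{nostre2} already grants that $X^{(h)}(t)=\sum_{n=0}^\infty X_n^{(h)}(t-t_0)^n$ is the unique mean square analytic solution of the homogeneous problem (\ref{problem}) on $(t_0-r,t_0+r)$, so the whole task reduces to showing that $X^{(p)}(t)=\sum_{n=0}^\infty X_n^{(p)}(t-t_0)^n$ converges in $\leb^2(\Omega)$ on the same interval and solves (\ref{problem2}) with zero initial data.

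Next, I would extract geometric bounds on the coefficients. Fixing any $0\leq r_1 <r$, repeating the argument from the proof of Theorem~\ref{nostre2} produces a constant $C_{r_1}>0$ with $\|A_n\|_{\leb^\infty(\Omega)}\leq C_{r_1}/r_1^n$ and $\|B_n\|_{\leb^\infty(\Omega)}\leq C_{r_1}/r_1^n$ for every $n\geq 0$. For $C(t)$, the mean square convergence of $\sum_{n=0}^\infty C_n (t-t_0)^n$ at an intermediate point $t_0+r'$ with $r_1<r'<r$ forces the general term $\|C_n\|_{\leb^2(\Omega)}(r')^n$ to tend to zero, so it is bounded by some $K_{r_1}>0$; hence $\|C_n\|_{\leb^2(\Omega)}\leq K_{r_1}/(r')^n$ and in particular $\sum_{n\geq 0}\|C_n\|_{\leb^2(\Omega)} r_1^n <\infty$.

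With these estimates in hand, I would take $\leb^2(\Omega)$ norms in (\ref{xn2_2}) for $X^{(p)}$, exploiting $\|A_{n-m}X_{m+1}^{(p)}\|_{\leb^2(\Omega)}\leq \|A_{n-m}\|_{\leb^\infty(\Omega)}\|X_{m+1}^{(p)}\|_{\leb^2(\Omega)}$ and the analogous inequality for $B$. This converts the recursion into exactly the inductive majorisation already performed in \cite[Th.~3.3]{contribucio} for the homogeneous case, except for one additional summand $\|C_n\|_{\leb^2(\Omega)}/((n+2)(n+1))$. Running that same induction with the enlarged right-hand side should yield geometric control of $\|X_n^{(p)}\|_{\leb^2(\Omega)}$ on $(t_0-r_1,t_0+r_1)$; since $r_1<r$ is arbitrary, the mean square convergence of $X^{(p)}$ extends to the whole interval $(t_0-r,t_0+r)$.

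Finally, I would verify that $X=X^{(h)}+X^{(p)}$ solves (\ref{problem2}). Termwise differentiation of random power series in $\leb^2(\Omega)$ (\cite[Th.~3.1]{contribucio}), together with the random Merten's theorem (\cite[Th.~3.2]{contribucio}), allows one to form $A(t)\dot X(t)$ and $B(t)X(t)$ as random Cauchy products and to identify coefficients of $(t-t_0)^n$ in the equation: the resulting identities are precisely (\ref{x0y0_2})--(\ref{xn2_2}), confirming at once that the constructed $X$ is a solution and that any analytic mean square solution of (\ref{problem2}) must share these coefficients, giving uniqueness. I expect the main technical point to lie in the inductive estimate for $\|X_n^{(p)}\|_{\leb^2(\Omega)}$ with the inhomogeneous $C_n$ term, since the geometric radii for $A_n$, $B_n$, $C_n$ and $X_n^{(p)}$ must be chosen compatibly; nonetheless this is an adaptation of the estimate in \cite{contribucio} rather than a genuinely new obstacle.
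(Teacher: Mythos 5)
Your proposal is correct and follows essentially the same route as the paper: geometric bounds $\|A_n\|_{\leb^\infty(\Omega)},\|B_n\|_{\leb^\infty(\Omega)},\|C_n\|_{\leb^2(\Omega)}\leq D_s/s^n$ on each subinterval, an $\leb^2(\Omega)$ majorisation of the recurrence, and reduction to the convergence argument of \cite[Th.~3.3]{contribucio}; the superposition splitting $X=X^{(h)}+X^{(p)}$ is harmless but unnecessary, since the paper majorises the full sequence $X_n$ directly. The one technical point you defer (the inductive estimate with the inhomogeneous $C_n$ term) is settled in the paper by rewriting the majorant recurrence as a second-order difference equation, in which the extra contribution of $C_n$ telescopes away so that the recurrence coincides exactly with the homogeneous one of \cite{contribucio}, only the initial conditions changing.
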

\begin{proof}
After performing formal manipulations with the random power series (all of them justified by \cite[Th.~3.1, Th.~3.2]{contribucio}), one finds an analogous expression to \cite[expr.~(9)]{contribucio}:
\small
\[ \sum_{n=0}^\infty \left[(n+2)(n+1)X_{n+2}+\sum_{m=0}^n\left(A_{n-m}(m+1)X_{m+1}+B_{n-m}X_{m}\right)\right](t-t_0)^n=\sum_{n=0}^\infty C_n(t-t_0)^n. \]
\normalsize
From here, the recursive relation (\ref{xn2_2}) is obtained. 

Thus, it only remains to prove that the random power series $X(t)=\sum_{n=0}^\infty X_n(t-t_0)^n$, whose coefficients are defined by (\ref{x0y0_2}) and (\ref{xn2_2}), converges in the mean square sense.

From the hypothesis $Y_0,Y_1\in\leb^2(\Omega)$ and by induction on $n$ in expression (\ref{xn2_2}), we obtain that $X_n\in\leb^2(\Omega)$ for all $n\geq0$. On the other hand, by \cite[Lemma~2.3]{arxiv}, 
\[ \sum_{n=0}^\infty \|A_n\|_{\leb^\infty(\Omega)} s^n<\infty, \;\; \sum_{n=0}^\infty \|B_n\|_{\leb^\infty(\Omega)} s^n<\infty,\;\; \sum_{n=0}^\infty \|C_n\|_{\leb^2(\Omega)} s^n<\infty, \]
for $0<s<r$. As the general term of a convergent series tends to $0$, we have the following bounds:
\begin{equation} \|A_n\|_{\leb^\infty(\Omega)}\leq \frac{D_s}{s^n},\;\; \|B_n\|_{\leb^\infty(\Omega)}\leq \frac{D_s}{s^n},\;\; \|C_n\|_{\leb^2(\Omega)}\leq \frac{D_s}{s^n},\;n\geq0, 
 \label{anbncn}
\end{equation}
for certain constant $D_s>0$ that depends on $s$. Then, from (\ref{xn2_2}), if we apply $\leb^2(\Omega)$ norms and (\ref{anbncn}), we obtain:
\small
\begin{align}
\|X_{n+2}\|_{\leb^2(\Omega)}\leq {} & \frac{1}{(n+2)(n+1)}\left\{\sum_{m=0}^n \left[(m+1)\|A_{n-m}X_{m+1}\|_{\leb^2(\Omega)}+\|B_{n-m}X_m\|_{\leb^2(\Omega)}\right]+\|C_n\|_{\leb^2(\Omega)}\right\} \nonumber \\
\leq {} &  \frac{1}{(n+2)(n+1)}\frac{D_s}{s^n}\left\{\sum_{m=0}^n s^m \left((m+1)\|X_{m+1}\|_{\leb^2(\Omega)}+\|X_m\|_{\leb^2(\Omega)}\right)+1\right\}. \label{X_n_2}
\end{align}
\normalsize
Define $H_0:=\|Y_0\|_{\leb^2(\Omega)}$, $H_1:=\|Y_1\|_{\leb^2(\Omega)}$ and
\begin{equation}
 H_{n+2}:=\frac{1}{(n+2)(n+1)}\frac{D_s}{s^n}\left\{\sum_{m=0}^n s^m \left((m+1)H_{m+1}+H_m\right)+1\right\},\quad n\geq0.
 \label{hn2_2}
\end{equation}
From (\ref{X_n_2}) and (\ref{hn2_2}), by induction on $n$ it is trivially seen that $\|X_n\|_{\leb^2(\Omega)}\leq H_n$, for $n\geq0$. If we check that $\sum_{n=0}^\infty H_n s^n<\infty$, then the random series that defines $X(t)$ converges in the mean square sense, as wanted.

We rewrite (\ref{hn2_2}) so that $H_{n+2}$ is expressed as a function of $H_{n+1}$ and $H_n$ (second order recurrence equation). By assuming $n\geq1$, we perform the following operations:
\begin{align}
H_{n+2}= {} & \frac{1}{(n+2)(n+1)}\frac{D_s}{s^n}\left(\sum_{m=0}^{n-1} s^m \left((m+1)H_{m+1}+H_m\right)+1+s^n \left((n+1)H_{n+1}+H_n\right)\right) \nonumber \\
= {} & \frac{1}{(n+2)(n+1)}\frac{D_s}{s^n}\frac{(n+1)n}{D_s}s^{n-1} \bigg(\underbrace{\frac{1}{(n+1)n}\frac{D_s}{s^{n-1}}\left\{\sum_{m=0}^{n-1} s^m \left((m+1)H_{m+1}+H_m\right)+1\right\}}_{=H_{n+1}}\bigg) \nonumber \\
+ {} & \frac{D_s}{n+2}H_{n+1}+\frac{D_s}{(n+2)(n+1)}H_n \nonumber \\
= {} & \left(\frac{n}{(n+2)s}+\frac{D_s}{n+2}\right)H_{n+1}+\frac{D_s}{(n+2)(n+1)}H_n. \label{curteta2}
\end{align}
This difference equation of order $2$ has as initial conditions 
\[ H_2=\frac{D_s}{2}\left(H_1+H_0+1\right),\quad H_1=\|Y_1\|_{\leb^2(\Omega)},\quad H_0=\|Y_0\|_{\leb^2(\Omega)}. \]
Notice that $H_2$ is obtained from \eqref{hn2_2}. Expression~(\ref{curteta2}) coincides with \cite[expr.~(12)]{contribucio} (although with different initial conditions). Then the method of proof for $\sum_{n=0}^\infty H_n s^n<\infty$ is identical to the last part of the proof of \cite[Th.~3.3]{contribucio}.
\end{proof}

\begin{example} \label{exNou}
Let us consider an Hermite's random differential equation with a stochastic source term:
\begin{equation}
 \begin{cases} \ddot{X}(t)-2t\dot{X}(t)+AX(t)=C t^2, \; t\in\mathbb{R}, \\ X(0)=Y_0, \\ \dot{X}(0)=Y_1, \end{cases} 
 \label{problem_hermite2}
\end{equation}
where $A$, $C$, $Y_0$ and $Y_1$ are random variables. Due to the non-homogeneity of the equation, this example cannot be addressed with \cite{contribucio}. We have set the following probability distributions:
\[ A\sim\text{Bernoulli}(0.35),\quad Y_0\sim\text{Gamma}(2,2),\quad (Y_1,C)\sim\text{Multinomial}(3,\{0.2,0.8\}) \]
(for the Gamma distribution, we use the shape-rate notation), where $A$, $Y_0$ and $(Y_1,C)$ are independent. Notice that we are considering both discrete and absolutely continuous random variables/vectors, and also both independent and non-independent random variables/vectors. Thus, Fr\"obenius method covers a wide variety of situations in practice. Since $A$ is bounded and $Y_0,Y_1,C\in\leb^2(\Omega)$, Theorem~\ref{nostre3} ensures that the random power series $X(t)=\sum_{n=0}^\infty X_n(t-t_0)^n$ defined by (\ref{x0y0_2})--(\ref{xn2_2}) is a mean square solution to (\ref{problem_hermite2}) on $\mathbb{R}$. By considering the partial sums $X^N(t)=\sum_{n=0}^N X_n(t-t_0)^n$, we approximate the expectation and variance of $X(t)$ as 
\[ \mathbb{E}[X(t)]=\lim_{N\rightarrow\infty}\mathbb{E}[X_N(t)],\quad \mathbb{V}[X(t)]=\lim_{N\rightarrow\infty}\mathbb{V}[X_N(t)]. \]
The computations have been performed in the software Mathematica\textsuperscript{\tiny\textregistered}. Our code to build the partial sum $X^N(t)$ has been the following one:
\footnotesize
\begin{verbatim}
X[n_?NonPositive] := Y0;
X[1] = Y1;
X[n_] := 1/(n*(n - 1))*(-Sum[(m + 1)*A[n - 2 - m]*X[m + 1] + 
        B[n - 2 - m]*X[m], {m, 0, n - 2}] + CC[n - 2]);
seriesX[t_, t0_, N_] := X[0] + Sum[X[n]*(t - t0)^n, {n, 1, N}];
\end{verbatim}
\normalsize
For each numeric value of \verb|N|, the functions $t\mapsto \mathbb{E}[X^N(t)]$ and $t\mapsto \mathbb{V}[X^N(t)]$ have been calculated with the built-in function \verb|Expectation| applied to \verb|seriesX[t, 0, N]| (with symbolic \verb|t|), by setting the desired probability distributions to \verb|A[n]|, \verb|B[n]| and \verb|CC[n]|. In Table~\ref{exampleTable1} and Table~\ref{exampleTable2}, we show $\mathbb{E}[X^N(t)]$ and $\mathbb{V}[X^N(t)]$ for $N=19$, $N=20$ and $0\leq t\leq 1.5$. Both orders of truncation produce similar results, which agrees with the theoretical convergence. Observe that, as we move away from the initial condition $t_0=0$, larger orders of truncation are needed. This indicates that Fr\"obenius method might be computationally inviable for large $t$. The results have been compared with Monte Carlo simulation (with $100,000$ and $200,000$ realizations). 

\begin{table}[hbtp!]
\begin{center}
\begin{tabular}{|c|c|c|c|c|} \hline
$t$ & $\mathbb{E}[X^{19}(t)]$ & $\mathbb{E}[X^{20}(t)]$ & MC $100,000$ & MC $200,000$  \\ \hline
$0.00$ & $1$ & $1$ & $0.995893$ & $1.00266$  \\ \hline
$0.25$ & $1.14231$ & $1.14231$ & $1.13899$ & $1.14544$  \\ \hline
$0.50$ & $1.28890$ & $1.28890$ & $1.28672$ & $1.29236$ \\ \hline
$0.75$ & $1.49183$ & $1.49183$ & $1.49130$ & $1.49547$ \\ \hline
$1.00$ & $1.85892$ & $1.85892$ & $1.86087$ & $1.86246$ \\ \hline
$1.25$ & $2.62573$ & $2.62574$ & $2.63173$ & $2.62863$ \\ \hline
$1.50$ & $4.34772$ & $4.34784$ & $4.36111$ & $4.34892$ \\ \hline
\end{tabular}
\caption{Approximation of $\mathbb{E}[X(t)]$ with $N=19$, $N=20$ and Monte Carlo simulations. Example \ref{exNou}.}
\label{exampleTable1}
\end{center}
\end{table}

\begin{table}[hbtp!]
\begin{center}
\begin{tabular}{|c|c|c|c|c|} \hline
$t$ & $\mathbb{V}[X^{19}(t)]$ & $\mathbb{V}[X^{20}(t)]$ & MC $100,000$ & MC $200,000$  \\ \hline
$0.00$ & $0.5$ & $0.5$ & $0.493124$ & $0.504501$\\ \hline
$0.25$ & $0.520298$ & $0.520298$ & $0.514702$ & $0.524803$\\ \hline
$0.50$ & $0.597008$ & $0.597008$ & $0.593603$ & $0.601376$  \\ \hline
$0.75$ & $0.790556$ & $0.790556$ & $0.790161$ & $0.794549$ \\ \hline
$1.00$ & $1.27425$ & $1.27425$ & $1.27702$ & $1.27759$ \\ \hline
$1.25$ & $2.60694$ & $2.60694$ & $2.60987$ & $2.60982$  \\ \hline
$1.50$ & $6.94095$ & $6.94100$ & $6.92663$ & $6.94787$ \\ \hline
\end{tabular}
\caption{Approximation of $\mathbb{V}[X(t)]$ with $N=19$, $N=20$ and Monte Carlo simulations. Example \ref{exNou}.}
\label{exampleTable2}
\end{center}
\end{table}

\end{example}

Notice that the theoretical error estimates from \cite[Subsection~3.6]{contribucio} apply in this case too, since all estimates rely on the majorization $\|X_n\|_{\leb^2(\Omega)}\leq H_n$ and the recursive equation (\ref{curteta2}), which also hold in \cite{contribucio}.

An important issue that was not treated in the recent contribution \cite{contribucio} is the uniqueness of mean square solution. To deal with uniqueness, we use an habitual extension of the classical Picard Theorem to mean square calculus \cite[Th.~5.1.2]{soong}, see Theorem~\ref{uniquenessTh}. Notice that, in our setting of analyticity for $A(t)$ and $B(t)$ in the $\leb^\infty(\Omega)$ sense, one has that $A(t)$ and $B(t)$ are continuous in the $\leb^\infty(\Omega)$, so the uniqueness from Theorem~\ref{uniquenessTh} is applicable.

\begin{theorem} \label{uniquenessTh}
If $A(t)$ and $B(t)$ are continuous stochastic processes in the $\leb^{\infty}(\Omega)$ sense, then the mean square solution to (\ref{problem2}) is unique.
\end{theorem}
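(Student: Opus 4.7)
The plan is to reduce the second-order equation to a first-order vector-valued random differential equation and invoke the mean square Picard theorem \cite[Th.~5.1.2]{soong} cited in the statement. Setting $\mathbf{Z}(t)=(X(t),\dot{X}(t))^\top$, problem (\ref{problem2}) becomes $\dot{\mathbf{Z}}(t)=\mathbf{F}(t,\mathbf{Z}(t))$, $\mathbf{Z}(t_0)=(Y_0,Y_1)^\top$, where
\[ \mathbf{F}\bigl(t,(z_1,z_2)^\top\bigr)=\bigl(z_2,\,-B(t)z_1-A(t)z_2+C(t)\bigr)^\top. \]
Picard's theorem then demands two ingredients: mean square continuity of $t\mapsto\mathbf{F}(t,\mathbf{z})$ for each fixed $\mathbf{z}\in\leb^2(\Omega)^2$, and a mean square Lipschitz condition in the state variable, uniform in $t$ on compact subintervals.

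For the continuity requirement, the process $C(t)$ is mean square analytic hence mean square continuous, while for $A(t)z_2$ (and analogously $B(t)z_1$) one uses the generalised H\"older estimate $\|A(t)z_2-A(s)z_2\|_{\leb^2(\Omega)}\leq \|A(t)-A(s)\|_{\leb^\infty(\Omega)}\|z_2\|_{\leb^2(\Omega)}$, so that $\leb^\infty(\Omega)$-continuity of $A$ transfers to mean square continuity of $\mathbf{F}(\cdot,\mathbf{z})$. For the Lipschitz requirement, fix a compact subinterval $[a,b]\subset(t_0-r,t_0+r)$; by $\leb^\infty(\Omega)$-continuity the quantities $M_A:=\sup_{t\in[a,b]}\|A(t)\|_{\leb^\infty(\Omega)}$ and $M_B:=\sup_{t\in[a,b]}\|B(t)\|_{\leb^\infty(\Omega)}$ are finite, and the same H\"older-type inequality gives, for all $\mathbf{z},\mathbf{w}\in\leb^2(\Omega)^2$ and $t\in[a,b]$,
\[ \|\mathbf{F}(t,\mathbf{z})-\mathbf{F}(t,\mathbf{w})\|_{\leb^2(\Omega)^2}\leq (1+M_A+M_B)\,\|\mathbf{z}-\mathbf{w}\|_{\leb^2(\Omega)^2}. \]

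With these two conditions in hand, the mean square Picard theorem yields uniqueness of the solution to the system on each compact subinterval $[a,b]$; a routine exhaustion argument over an increasing sequence of compacts then extends uniqueness to the entire interval $(t_0-r,t_0+r)$. Since any mean square solution $X(t)$ of (\ref{problem2}) produces a mean square solution $(X(t),\dot{X}(t))^\top$ of the first-order system with the prescribed initial data, uniqueness for $X$ follows. The only non-routine point is the mixed $\leb^\infty$-$\leb^2$ estimate at the heart of the Lipschitz step, which is exactly where the $\leb^\infty(\Omega)$-continuity hypothesis on $A$ and $B$ is essential; the rest of the argument reduces to the well-trodden deterministic template.
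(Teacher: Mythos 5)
Your proposal follows essentially the same route as the paper: reduce (\ref{problem2}) to a first-order system in $Z(t)=(X(t),\dot X(t))^\top$, use the $\leb^\infty$--$\leb^2$ H\"older estimate to obtain a Lipschitz bound governed by $\|A(t)\|_{\leb^\infty(\Omega)}+\|B(t)\|_{\leb^\infty(\Omega)}$, invoke the mean square Picard theorem \cite[Th.~5.1.2]{soong} on compact subintervals, and exhaust $(t_0-r,t_0+r)$. The only cosmetic difference is that the paper keeps a time-dependent Lipschitz function $k(t)=|||M(t)|||$ and uses its continuity to get $k\in\leb^1$ on compacts, whereas you take a uniform supremum bound; both are equivalent for the purpose of the theorem.
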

\begin{proof}
We write (\ref{problem}) as a first-order random differential equation, which is the setting under study in \cite{soong}:
\[ \underbrace{\begin{pmatrix} \dot{X}(t) \\ \ddot{X}(t) \end{pmatrix}}_{\dot{Z}(t)} = \underbrace{\begin{pmatrix} 0 & 1 \\ -B(t) & -A(t) \end{pmatrix}}_{M(t)} \underbrace{\begin{pmatrix} X(t) \\ \dot{X}(t) \end{pmatrix}}_{Z(t)}+ \underbrace{\begin{pmatrix} 0 \\ C(t) \end{pmatrix}}_{q(t)}. \]
We work in the space $\leb_2^2(\Omega)$ of $2$-dimensional random vectors whose components belong to $\leb^2(\Omega)$. Given $Z=(Z_1,Z_2)\in \leb_2^2(\Omega)$, its norm is defined as 
\[ \|Z\|_{\leb_2^2(\Omega)}=\max\{\|Z_1\|_{\leb^2(\Omega)}, \|Z_2\|_{\leb^2(\Omega)}\}. \] 
On the other hand, given a random matrix $B=(B_{ij})$, we define the following norm:
\[ |||B|||=\max_i \sum_j \|B_{ij}\|_{\leb^\infty(\Omega)}. \]
In the case of the random matrix $M(t)$, it holds
\begin{equation}
 |||M(t)|||=\max\{1,\|A(t)\|_{\leb^\infty(\Omega)}+\|B(t)\|_{\leb^\infty(\Omega)}\}. 
 \label{normaMt}
\end{equation}
Given $Z,Z'\in \leb_2^2(\Omega)$, we have 
\[ \|(M(t)Z+q(t))-(M(t)Z'+q(t))\|_{\leb_2^2(\Omega)}=\|M(t)(Z-Z')\|_{\leb_2^2(\Omega)}\leq \underbrace{|||M(t)|||}_{k(t)}\cdot \|Z-Z'\|_{\leb_2^2(\Omega)}. \]
Since $A(t)$ and $B(t)$ are continuous stochastic processes in the $\leb^{\infty}(\Omega)$ sense, the real maps
\[t\in(t_0-r,t_0+r)\mapsto \|A(t)\|_{\leb^{\infty}(\Omega)},\quad t\in(t_0-r,t_0+r)\mapsto \|B(t)\|_{\leb^{\infty}(\Omega)}\]
are continuous. By (\ref{normaMt}), the deterministic function $k(t)$ is continuous on $(t_0-r,t_0+r)$. This implies that $k\in\leb^1([t_0-r_1,t_0+r_1])$ for each $0<r_1<r$. By \cite[Th.~5.1.2]{soong}, there is uniqueness of mean square solution for (\ref{problem}) on $[t_0-r_1,t_0+r_1]$. Since $r_1$ is arbitrary, there is uniqueness of solution on $(t_0-r,t_0+r)$.
\end{proof}

\section{Comparison with other methods}\label{sec_comparison}

A final objective of this paper is to relate our method based on \cite{contribucio} (which is based on the deterministic Fr\"{o}benius method) with other well-known techniques to tackle (\ref{problem2}). In \cite{contribucio}, the random power series method was compared, both theoretically and in numerical experiments, with Monte Carlo simulations and the dishonest method \cite{Henderson_Plaschko_2006}. It was demonstrated that Monte Carlo simulations imply a more expensive computational cost to calculate accurately the expectation and variance statistics for $t$ near $t_0$, due to the slow rate of convergence. However, Monte Carlo simulations usually allow validating the numerical results obtained, as they always present convergence with a similar rate for every stochastic system \cite[p.~53]{xiu}.

The article \cite{contribucio} does not compare Fr\"obenius method with generalized polynomial chaos (gPC) expansions \cite{xiu,xiu_article,adapted,adaptedRVT,depen,epidRafa}, although it has been proved to be a powerful technique to deal with general continuous and discrete stochastic systems with absolutely continuous random input coefficients. Due to the spectral mean square convergence of the Galerkin projections, the expectation and variance statistics of the response process can be approximated with small orders of truncation. In the particular setting of random second order linear differential equations, only \cite{airygpc,airygpc2} analyze the application of gPC expansions to Airy's random differential equation, by assuming independence between the random input parameters. Recently, we have also studied the application of gPC expansions to the Legendre random differential equation with statistically dependent inputs in an arXiv preprint \cite{arxiv}. It could be part of a future work the application of gPC expansions to general random second order linear differential equations (\ref{problem2}). We believe that this is important because both the Fr\"obenius method and gPC expansions may validate each other in applications, since they provide good approximations of the expectation and variance statistics rapidly. Moreover, we believe that the gPC approach may provide better approximations of the statistics in the case of large times, see for example \cite{epid_Benito}, where for the classical continuous epidemic models (SIS, SIR, etc.) uncertainty quantification is performed via gPC up to time $60$ with chaoses bases of order just $2$ and $3$, producing very similar results; or \cite{epidRafa}, where an analogous study is performed for the corresponding discrete epidemiological models up to time $30$. Nonetheless, an excessively large number of input parameters may pose problems to the gPC-based method: if the chaos order is $p$ and the degree of uncertainty is $s$, then the length of the basis for the gPC expansions is $(p+s)!/(p!s!)$ \cite{xiu}, which may make the method computationally inviable. Another drawback of the gPC technique is that catastrophic numerical errors usually appear for large chaos orders, specially when dealing with truncated distributions \cite[Example~4.3]{arxiv}, \cite{gpc_rec_nostre}.

In \cite{contribucio}, we did not compare our methodology with the random differential transform method proposed in \cite{benito_transform}. Given a stochastic process $U(t)$, its random differential transform is defined as
\[ \hat{U}(k)=\frac{U^{(k)}(t_0)}{k!}. \]
Its inverse transform is defined as
\[ U(t)=\sum_{k=0}^\infty \hat{U}(k)(t-t_0)^k. \]
Notice that we are actually considering Taylor series in a random calculus setting. It is formally assumed that the series $\sum_{k=0}^\infty \hat{U}(k)(t-t_0)^k$ is mean square convergent on an interval $(t_0-r,t_0+r)$, $r>0$. The computations with the random differential transform method are analyzed in \cite[Th.~2.1]{benito_transform}:
\begin{itemize}
\item[(i)] If $U(t)=F(t)\pm G(t)$, then $\hat{U}(k)=\hat{F}(k)\pm \hat{G}(k)$.
\item[(ii)] If $U(t)=\lambda F(t)$, where $\lambda$ is a random variable, then $\hat{U}(k)=\lambda \hat{F}(k)$.
\item[(iii)] If $U(t)=G^{(m)}(t)$, then $\hat{U}(k)=(k+1)\cdots (k+m)\hat{G}(k+m)$.
\item[(iv)] If $U(t)=F(t)G(t)$, then $\hat{U}(k)=\sum_{n=0}^k \hat{F}(n)\hat{G}(k-n)$.
\end{itemize}
Notice that (iii) and (iv) can be seen as consequences of differentiating random power series \cite[Th.~3.1]{contribucio} and multiplying random power series \cite[Th.~3.2]{contribucio}, respectively. Thereby, the random transform method is actually the random Fr\"obenius method. The recursive equations found for $\hat{X}(k)$ are (\ref{xn2}). Our Theorem~\ref{nostre}, Theorem~\ref{nostre2} and Theorem~\ref{nostre3} give the conditions under which the inverse transform $\sum_{k=0}^\infty \hat{X}(k)(t-t_0)^k$ converges.

Thus, we believe that our recent contribution \cite{contribucio} together with the notes presented in this paper give an excellent approach to tackle (\ref{problem}) and/or (\ref{problem2}) with analytic random input processes. Apart from obtaining a mean square analytic solution to (\ref{problem}) and/or (\ref{problem2}), the expectation and variance of it can be calculated for uncertainty quantification.

\section{Summary, conclusions and future lines of research}\label{sec_conclusiones}

In this paper, we have written some notes and comments to complete our recent contribution \cite{contribucio} on the random non-autonomous second order linear differential equation. The main theorem from \cite{contribucio}, which deals with the homogeneous case, has been restated in a more convenient form to deal with practical applications. We have addressed the non-homogeneous case, by proving an existence theorem of mean square solution and performing a numerical example. On the other hand, the uniqueness of solution has been established by using Picard Theorem for mean square calculus. A comparison of the extant techniques for uncertainty quantification (Monte Carlo, gPC expansions, random differential transform method) with respect to the random Fr\"obenius method has been studied. 

This paper is a contribution to the field of random differential equations, as it completely generalizes to a random framework the deterministic theory on second order linear differential equations with analytic input data. To carry out the study, mean square calculus, and in general $\leb^p$ random calculus, become powerful tools to establish the theoretical results and perform uncertainty quantification.

Some future research lines related to the contents of this paper are the following: 
\begin{itemize}
\item Solve the open problem raised in this paper at the end of Section~\ref{homoSection}, concerning the necessity of the hypotheses of Theorem~\ref{nostre2}.
\item Apply the technique of gPC expansions and stochastic Galerkin projections to general random second order linear differential equations.
\item Extend Theorem~\ref{nostre3} to higher order random linear differential equations. Probably, one would need to require all input stochastic processes to be random power series in an $\leb^{\infty}$ sense, in analogy to the hypotheses of Theorem~\ref{nostre3}.
\item Apply the random Fr\"obenius method to the random Riccati differential equation with analytic input processes. In \cite[Section~3]{benito_transform}, the authors applied the random differential transform method (which is equivalent to a formal random Fr\"obenius method) to a particular case of the random Riccati differential equation with a random autonomous coefficient term. It would be interesting to apply the random Fr\"obenius method in the situation in which all input coefficients are analytic stochastic processes, by proving theoretical results and performing numerical experiments.
\end{itemize}

\section*{Acknowledgements}
This work has been supported by the Spanish Ministerio de Econom\'{i}a y Competitividad grant MTM2017--89664--P. The author Marc Jornet acknowledges the doctorate scholarship granted by Programa de Ayudas de Investigaci\'on y Desarrollo (PAID), Universitat Polit\`ecnica de Val\`encia.

\section*{Conflict of Interest Statement} 
The authors declare that there is no conflict of interests regarding the publication of this article.

\end{document}